\def\blfootnote{\gdef\@thefnmark{}\@footnotetext}
\theoremstyle{plain}
\newtheorem*{theorem*}{Theorem}
\newtheorem*{thma}{Theorem A}
\newtheorem*{thmb}{Theorem B}
\newtheorem{theorem}{Theorem}[section]
\newtheorem{lemma}[theorem]{Lemma}
\theoremstyle{remark}
\newtheorem{remark}[theorem]{Remark}
\theoremstyle{Acknowledgments}
\theoremstyle{definition}
 \def\Z{{\mathbb{Z}}}
\def\mod{{\rm Mod}}
 \def\Sp{{\rm Sp}}
\begin{document}
\blfootnote{\textup{2000} \textit{Mathematics Subject Classification}:
57M07, 20F05, 20F38}
\blfootnote{\textit{Keywords}:
Mapping class groups, punctured surfaces, involutions, generating sets}
\newenvironment{prooff}{\medskip \par \noindent {\it Proof}\ }{\hfill
$\square$ \medskip \par}
    \def\sqr#1#2{{\vcenter{\hrule height.#2pt
        \hbox{\vrule width.#2pt height#1pt \kern#1pt
            \vrule width.#2pt}\hrule height.#2pt}}}
    \def\square{\mathchoice\sqr67\sqr67\sqr{2.1}6\sqr{1.5}6}
\def\pf#1{\medskip \par \noindent {\it #1.}\ }
\def\endpf{\hfill $\square$ \medskip \par}
\def\demo#1{\medskip \par \noindent {\it #1.}\ }
\def\enddemo{\medskip \par}
\def\qed{~\hfill$\square$}

 \title[On the Torsion Generators ] {On the Torsion Generators of the Mapping Class Groups}

\author[T{\"{u}}l\.{i}n Altun{\"{o}}z,       Mehmetc\.{i}k Pamuk, and O\u{g}uz Y{\i}ld{\i}z ]{T{\"{u}}l\.{i}n Altun{\"{o}}z,    Mehmetc\.{i}k Pamuk, and O\u{g}uz Y{\i}ld{\i}z}

\address{Faculty of Engineering, Ba\c{s}kent University, Ankara, Turkey} 
\email{tulinaltunoz@baskent.edu.tr} 
\address{Department of Mathematics, Middle East Technical University,
 Ankara, Turkey}
 \email{mpamuk@metu.edu.tr}
 \address{Department of Mathematics, Middle East Technical University,
 Ankara, Turkey}
  \email{oguzyildiz16@gmail.com}


\begin{abstract} 
We study torsion generators for the (extended) mapping class group $\mod(\Sigma_g)$ ($\mod^{*}(\Sigma_{g})$) of a closed connected orientable surface of genus $g$.  
We show that for every $g\geq 14$, $\mod(\Sigma_{g})$ can be generated by two torsion elements of order $g+1$ if $g$ is even, and of orders $g+1$ and $\frac{g+1}{2}$ if $g$ is odd.  Also for $g\geq 16$, $\mod(\Sigma_g)$ can be generated by two torsion elements  of orders $g+1$ if $g+1$ is not divisible by $3$, and of orders $g+1$ and $\frac{g+1}{3}$ if $g+1$ is divisible by $3$. Similarly, we  obtain two torsion elements generating $\mod^{*}(\Sigma_{g})$.

\end{abstract}
\maketitle
  \setcounter{secnumdepth}{2}
 \setcounter{section}{0}
 
\section{Introduction}

Let $\Sigma_{g}$ denote a connected orientable surface of genus $g$. The mapping class group of $\Sigma_g$ is defined as the group of isotopy classes of orientation preserving diffeomorphisms and it is denoted by $\mod(\Sigma_g)$.  It is a classical result that $\mod(\Sigma_g)$ is generated by finitely many Dehn twists about nonseparating simple closed curves~\cite{de,H,l3}.

One of the main themes of our paper is to generate $\mod(\Sigma_g)$ with torsion elements of certain order.  In $1996$, Wajnryb~\cite{w} showed that $\mod(\Sigma_g)$ can be generated by two elements given as a product of Dehn twists. As the group is not abelian, this is the smallest possible. Korkmaz~\cite{mk2} improved this result by first showing that one of the two generators can be taken as a Dehn twist and the other as a torsion element. 
Korkmaz also proved that $\mod(\Sigma_g)$ can be generated by two torsion elements of order $4g+2$. Recently, the third author showed that $\mod(\Sigma_g)$ is generated by two torsions of small orders~\cite{y1}. McCarthy and Papadopoulus~\cite{mp}  showed that $\mod(\Sigma_g)$ can be generated 
by infinitely many conjugates of a single involution (element of order two) for $g\geq 3$.     In terms of generating by finitely many involutions, Luo~\cite{luo} showed that any Dehn twist about a nonseparating simple closed curve 
can be written as a product six involutions, which in turn implies that $\mod(\Sigma_g)$ can be generated by $12g+6$ involutions.  
Brendle and Farb~\cite{bf} obtained a generating set of six involutions for $g\geq3$. Following their work, Kassabov~\cite{ka} showed that 
$\mod(\Sigma_g)$ can be generated by four involutions if $g\geq7$.  Recently, Korkmaz~\cite{mk1} showed that $\mod(\Sigma_g)$ is generated by three involutions 
if $g\geq8$ and four involutions if $g\geq3$. The third author improved these results by showing that this group can be generated by three involutions if $g\geq6$~\cite{y2}.  The first of our main results in this direction is the following theorem (see Theorems \ref{lem1} and \ref{lem2}). 

\begin{thma}\label{thma}
For every $g\geq 14$, $\mod(\Sigma_{g})$ can be generated by two torsion elements of order $g+1$ if $g$ is even, and of orders $g+1$ and $\frac{g+1}{2}$ if $g$ is odd.  Also for $g\geq 16$,  $\mod(\Sigma_g)$ can be generated by two torsion elements  of orders $g+1$ if $g+1$ is not divisible by $3$, and of orders $g+1$ and $\frac{g+1}{3}$ if $g+1$ is divisible by $3$.
\end{thma}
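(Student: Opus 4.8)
The plan is to prove the two assertions independently: the even/odd dichotomy of the first sentence and the divisibility-by-three dichotomy of the second are handled as two separate auxiliary results (Theorems \ref{lem1} and \ref{lem2}). In each case the strategy is identical: exhibit two explicit finite-order mapping classes and prove that the subgroup they generate is all of $\mod(\Sigma_g)$. I would start from a fixed Dehn-twist generating set --- the Humphries generators $t_{c_1},\ldots,t_{c_{2g+1}}$ about a chain of nonseparating curves --- so that ``generation'' reduces to producing each $t_{c_i}$ inside the subgroup.

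The first generator $F$ is a rotational torsion element of order $g+1$, obtained by realizing $\Sigma_g$ with a $(g+1)$-fold symmetry chosen so that conjugation by $F$ cyclically permutes a distinguished subfamily of the chain curves, i.e. $F\, t_{c_i}\, F^{-1}=t_{c_{i+1}}$ for the relevant indices. Once such an $F$ is available, the subgroup $\langle F, t_{c_1}\rangle$ already contains every $F^{j} t_{c_1} F^{-j}=t_{c_{1+j}}$, so it suffices to recover the single Dehn twist $t_{c_1}$ from $F$ together with a second torsion element. The second generator $G$ is then built as a product of $F$ with a Dehn twist (or with a second symmetric map) arranged so that $G$ is itself of finite order --- recognizable as an honest symmetry of a related configuration --- while some short word in $F$ and $G$ equals $t_{c_1}$. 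This forces $t_{c_1}\in\langle F, G\rangle$, and combined with the conjugation relations gives $\langle F, G\rangle=\mod(\Sigma_g)$.

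The reduced orders in the special cases reflect that the second generator may be chosen among a family of rotations whose order divides $g+1$. When $g$ is odd, so that $g+1$ is even, the relevant symmetry is realized by a rotation of order $\frac{g+1}{2}$, and one checks that this smaller-order element, together with $F$, still recovers $t_{c_1}$ and hence generates; the analogous rotation of order $\frac{g+1}{3}$ is available exactly when $3\mid g+1$, while in the other residue classes the second generator must retain full order $g+1$. Verifying that the chosen element still satisfies the key word relation and still has exactly the asserted order is where the case analysis on $g\bmod 2$ and $g\bmod 3$ enters, and is also where the bounds $g\geq 14$ and $g\geq 16$ are used: the explicit symmetric models need enough handles for the chain to attain the required length and for the two rotations to interact as prescribed.

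The main obstacle I anticipate is twofold. First, one must verify that the candidate second element $G$ genuinely has finite order of exactly the claimed value; this means identifying it with a concrete periodic mapping class (via a fixed symmetric structure on $\Sigma_g$) rather than merely checking a relation, since a product of a Dehn twist with a finite-order map is generically of infinite order. Second, one must confirm that $\langle F, G\rangle$ is the \emph{full} group and not a proper subgroup, which is done by showing the subgroup contains a complete Humphries-type generating set; this reduces to a finite sequence of conjugation and commutator identities among the chain twists. Arranging these identities to hold simultaneously is precisely what pins down the symmetries employed and forces the genus thresholds.
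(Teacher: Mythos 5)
Your high-level architecture does match the paper's: one generator is the order-$(g+1)$ rotation $S$ of a symmetric model of $\Sigma_g$, and the second is $S^k$ composed with a product of Dehn twists of the form $x\,y^{-1}$ where $S^k x S^{-k}=y$, so that $S^kxy^{-1}=(S^kx)S^k(S^kx)^{-1}$ is conjugate to $S^k$ and hence automatically torsion of order $(g+1)/\gcd(k,g+1)$; this is exactly the paper's Lemma~\ref{order}, and it also explains the orders $\frac{g+1}{2}$ and $\frac{g+1}{3}$. (One small correction: the paper does not choose a different, smaller rotation when $g$ is odd --- the same element $S^2F_1$ simply inherits the order of $S^2$, which is $g+1$ or $\frac{g+1}{2}$ according to the parity of $g$.)

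There is, however, a genuine gap in your generation argument. Because the torsion second generator must have the form $S^k x y^{-1}$, the elements you can extract from $\langle S,\,S^kxy^{-1}\rangle$ by conjugation and cancellation are \emph{differences} of Dehn twists $t_at_b^{-1}$, never a single twist; your claim that ``some short word in $F$ and $G$ equals $t_{c_1}$'' has no mechanism behind it, and the obvious fix $G=F\,t_{c_1}$ (so that $F^{-1}G=t_{c_1}$ trivially) destroys the torsion property, as you yourself note. The paper converts twist-differences into an honest twist via the lantern relation $A_1C_1C_2A_3=A_2E_1D$, rewritten as $A_3=(A_2C_1^{-1})(E_1C_2^{-1})(DA_1^{-1})$; nothing in your outline plays this role. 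Relatedly, your reduction ``$\langle F,t_{c_1}\rangle$ contains every $F^jt_{c_1}F^{-j}$, so it suffices to recover $t_{c_1}$'' is insufficient: the rotation $S$ has two orbits on the Humphries curves (the $b_i$ on one hand, and $a_1,a_g,c_1,\dots,c_{g-1}$ on the other, with the extra Humphries curve $a_2$ in neither), so the $S$-conjugates of a single twist do not yield the full Humphries set. The paper fills both holes with the intermediate Theorem~\ref{thm1} --- $S$ together with the three twist-differences $A_1A_2^{-1}$, $B_1B_2^{-1}$, $C_1C_2^{-1}$ generates $\mod(\Sigma_g)$, proved using the lantern relation --- and then reduces Theorems~\ref{lem1} and~\ref{lem2} to producing those three elements inside the two-generator subgroup by a careful chain of conjugations exploiting disjointness of the chosen curves (which is also where the thresholds $g\geq 14$ and $g\geq 16$ enter). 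Some such intermediate statement, together with the lantern relation or a substitute, is what your proposal is missing.
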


\begin{remark}
Since there is a surjective homomorphism from $\mod(\Sigma_{g})$ onto the symplectic group $\Sp(2g, \Z)$, the same conclusions can also be made for the symplectic group.  
\end{remark}

Recall that the extended mapping class group, denoted by $\mod^{*}(\Sigma_{g})$,  is defined to be the group of isotopy classes of all self-diffeomorphisms of $\Sigma_{g}$, including orientation reversing self-diffeomorphisms. 
The mapping class group $\mod(\Sigma_{g})$ is an index two normal subgroup of $\mod^{*}(\Sigma_{g})$.

In the second part of the paper, we try to extend our results about $\mod(\Sigma_{g})$ to $\mod^{*}(\Sigma_{g})$.  First let us review the previous results about generating  $\mod^{*}(\Sigma_{g})$. Korkmaz~\cite{mk2}, proved that $\mod^{*}(\Sigma_{g})$ can be generated by two elements, one of which is a Dehn twist.
Moreover, it follows from~\cite[Theorem $14$]{mk2} that $\mod^{*}(\Sigma_{g})$ can be generated by three torsion elements for $g\geq1$. 
Also, Du~\cite{du1, du2} showed that $\mod^{*}(\Sigma_{g})$ can be generated by two torsion elements of order $2$ and $4g+2$ for $g\geq 3$.
In terms of involution generators,  as it contains nonabelian free groups, the minimal number of involution generators is three and 
Stukow~\cite{st} proved that $\mod^{*}(\Sigma_{g})$ can be generated by three involutions for $g\geq1$.  Our main result on this subject can be summarized as in the following theorem (see Theorems \ref{lem11} and \ref{lem12}).  

\begin{thmb}\label{thmb}
If $g\geq 14$, then $\mod^{*}(\Sigma_g)$ can be generated by two torsion elements.  The order of one of them is $g+1$ or $2g+2$ when $g$ is odd or even, respectively. The order of the other generator is $g+1$ or $\frac{g+1}{2}$ when $g$ is even or odd, respectively.

Also for $g\geq 16$, $\mod^{*}(\Sigma_g)$ can be generated by two elements. 
The order of one of the generators is $g+1$ or $2g+2$  when $g$ is odd or even, respectively.  The order of the second generator can be summarized as below:
\[ 
\begin{cases} 
      g+1 & \textrm{if}~ 3\nmid g+1 ~\textrm{and}~ g ~\textrm{is odd,} \\
       2g+2 & \textrm{if}~ 3\nmid g+1 ~\textrm{and}~ g ~\textrm{is even,} \\
      \frac{g+1}{3} & \textrm{if}~ 6\mid g+1, \\
       \frac{2g+2}{3} & \textrm{if}~ 6\mid g+1 ~\textrm{and}~ 6\nmid g+1.
   \end{cases}
\]
\end{thmb}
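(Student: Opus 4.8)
The plan is to exploit Theorem A together with the fact that $\mod(\Sigma_g)$ is a normal subgroup of index two in $\emod(\Sigma_g)$. The organizing principle is the following elementary observation: if $x,y$ generate $\mod(\Sigma_g)$ and $\widetilde{x}\in\emod(\Sigma_g)$ is an orientation-reversing element with $\widetilde{x}^{\,2}=x$, then $\langle \widetilde{x},y\rangle$ contains $\widetilde{x}^{\,2}=x$ and $y$, hence contains all of $\mod(\Sigma_g)$; since it also contains the orientation-reversing $\widetilde{x}$ it is not contained in $\mod(\Sigma_g)$, and because the index is two it must equal $\emod(\Sigma_g)$. The whole problem thus reduces to replacing one (or, in some sub-cases, both) of the orientation-preserving torsion generators produced by Theorem A by orientation-reversing torsion elements of the prescribed orders, in such a way that generation is preserved.

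First I would dispose of the case $g$ even. Then $g+1$, and likewise $\tfrac{g+1}{3}$ when $3\mid g+1$, is odd, and the relevant generator of Theorem A is realized as a rotation $a$ of order $g+1$ of a suitably symmetric embedding of $\Sigma_g$. Taking $\widetilde{a}$ to be the associated rotary reflection (improper rotation) through half the angle gives $\widetilde{a}^{\,2}=a$, and a direct order computation, using that $g+1$ is odd, shows that $\widetilde{a}$ is orientation-reversing of order exactly $2(g+1)=2g+2$. The observation above then closes this case at once; when the target orders require it (for instance the sub-case $3\nmid g+1$ of the second part) one performs the same substitution on both generators, and for $3\mid g+1$ one applies it to the order-$\tfrac{g+1}{3}$ generator to obtain an orientation-reversing element of order $\tfrac{2g+2}{3}$.

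Next I would treat the case $g$ odd. The essential new constraint is that an orientation-reversing diffeomorphism must have even order, since any odd power of it is again orientation-reversing and hence cannot be the identity. As $g+1$ is always even for $g$ odd while $\tfrac{g+1}{2}$ (or $\tfrac{g+1}{3}$) may be odd, the orientation-reversing generator is forced to be the one of order $g+1$, and the other generator, of order $\tfrac{g+1}{2}$ (respectively $\tfrac{g+1}{3}$), remains orientation-preserving. I would build the orientation-reversing generator as $R=a\sigma$, where $a$ is the order-$(g+1)$ rotation of Theorem A and $\sigma$ is an orientation-reversing involution commuting with $a$ (a reflection compatible with the symmetric embedding). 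Then $R^{g+1}=a^{g+1}\sigma^{g+1}=1$ since $g+1$ is even, and separating even from odd powers shows that $R$ has order exactly $g+1$ with $R^{2}=a^{2}$.

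The main obstacle is exactly this odd case. In contrast to the even case, $R$ only recovers $R^{2}=a^{2}$, an element of order $\tfrac{g+1}{2}$, so the clean index-two argument does not apply verbatim: a priori $\langle a^{2},b\rangle$ might be a proper subgroup of $\mod(\Sigma_g)$. The crux is therefore to prove that the orientation-preserving part $\langle R,b\rangle\cap\mod(\Sigma_g)$ already fills out $\mod(\Sigma_g)$. I expect to achieve this by locating the full Theorem A generating pair inside $\langle R,b\rangle$---either by recovering the rotation $a$ itself (equivalently the reflection $\sigma$) from suitable words in $R$ and $b$, or by re-running the generation argument of Theorems \ref{lem1} and \ref{lem2} for the pair $(a^{2},\,\sigma b\sigma)$ of orientation-preserving elements that the symmetry makes available. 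The divisibility-by-three statements are handled by the same two templates, with $g+1$ replaced throughout by $\tfrac{g+1}{3}$ and Theorem A's corresponding generators, the parity of $g$ again dictating whether one uses a rotary-reflection square root (which doubles the order) or a reflection-twisted rotation (which preserves the order).
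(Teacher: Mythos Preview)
Your index-two shortcut is sound in spirit but breaks at two places. First, the ``rotary reflection through half the angle'' does not preserve $\Sigma_g$: the model has exactly $(g{+}1)$-fold rotational symmetry, so rotating by $\pi/(g{+}1)$ moves each handle into a gap, and composing with the reflection in $x=0$ does not repair this. The element that actually exists is $T=SR$ with $S$ and $R$ commuting, hence $T^2=S^2$ (not $T^2=S$). When $g$ is even this is harmless---$g{+}1$ is odd, so $\langle S^2\rangle=\langle S\rangle$, and then $\langle T,b\rangle\supseteq\langle S,b\rangle=\mod(\Sigma_g)$ by Theorem~A; here your route is in fact shorter than the paper's. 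But note that for the second half of Theorem~B with $g$ even both required generators are orientation-reversing (orders $2g{+}2$ and $2g{+}2$, or $2g{+}2$ and $\tfrac{2g+2}{3}$), and your ``perform the same substitution on both'' is not a construction: the second generator $S^3K_1$ is not a symmetry of the surface, so there is no evident orientation-reversing square root of it, and once both generators are orientation-reversing the clean containment argument no longer applies.

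Second, for $g$ odd your own diagnosis is correct and your proposed cures do not work as stated. From $T^2=S^2$ you only recover the proper subgroup $\langle S^2\rangle$, so a black-box appeal to Theorem~A fails; there is no visible word in $T$ and $b$ producing $S$ (equivalently the bare reflection), and ``re-running the argument for the pair $(a^2,\sigma b\sigma)$'' does not match the situation, since $\sigma b\sigma=S^2F_1^{-1}\neq b$. What the paper does instead is uniform and avoids all case splits: it first proves the analogue of Theorem~\ref{thm1} with $T$ in place of $S$ (Theorem~\ref{thm2}), and then re-runs the proofs of Theorems~\ref{lem1}--\ref{lem2} verbatim with $T$ in place of $S$. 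The enabling observation you are missing is that $T(\alpha)=S(\alpha)$ for every curve used, so the curve combinatorics is identical; the only difference is that conjugation by an odd power of $T$ sends a Dehn twist to its inverse, which forces systematic sign bookkeeping throughout and, in the $T^3$-case, a genuinely different second generator $T^3A_gE_0B_8B_{11}E_3C_2$ (the last three twists un-inverted relative to Theorem~A) so that Lemma~\ref{order} still applies. Supplying that mechanism is the real content of the proof, and your outline does not provide it.
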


The paper is organized as follows: In Section~\ref{S2}, we quickly provide the necessary background on mapping class groups and state a useful result from algebra. In Section~\ref{S3}, we state our results on the torsion generating sets of $\mod(\Sigma_{g})$.  In Section~\ref{S4}, we extend our results about $\mod(\Sigma_{g})$ to $\mod^{*}(\Sigma_{g})$. 


\par  
\section{Background and Results on Mapping Class Groups} \label{S2}

  Throughout the paper we do not distinguish a 
 diffeomorphism from its isotopy class. For the composition of two diffeomorphisms, we
use the functional notation; if $f$ and $g$ are two diffeomorphisms, then
the composition $fg$ means that $g$ is applied first and then $f$.

For a simple closed curve $a$ on $\Sigma_{g}$, we denote the right-handed  
Dehn twist $t_a$ about $a$ by the corresponding capital letter $A$.
Let us also remind the following basic facts of Dehn twists that we use frequently: Let $a$ and $b$ be two
simple closed curves on $\Sigma_{g}$ and $f\in \mod(\Sigma_{g})$.
\begin{itemize}
\item  If $a$ and $b$ are disjoint, then $AB=BA$ (\textit{Commutativity}).
\item If $f(a)=b$, then $fAf^{-1}=B$ (\textit{Conjugation}).
\end{itemize}

\begin{figure}[hbt!]
\begin{center}
\scalebox{0.25}{\includegraphics{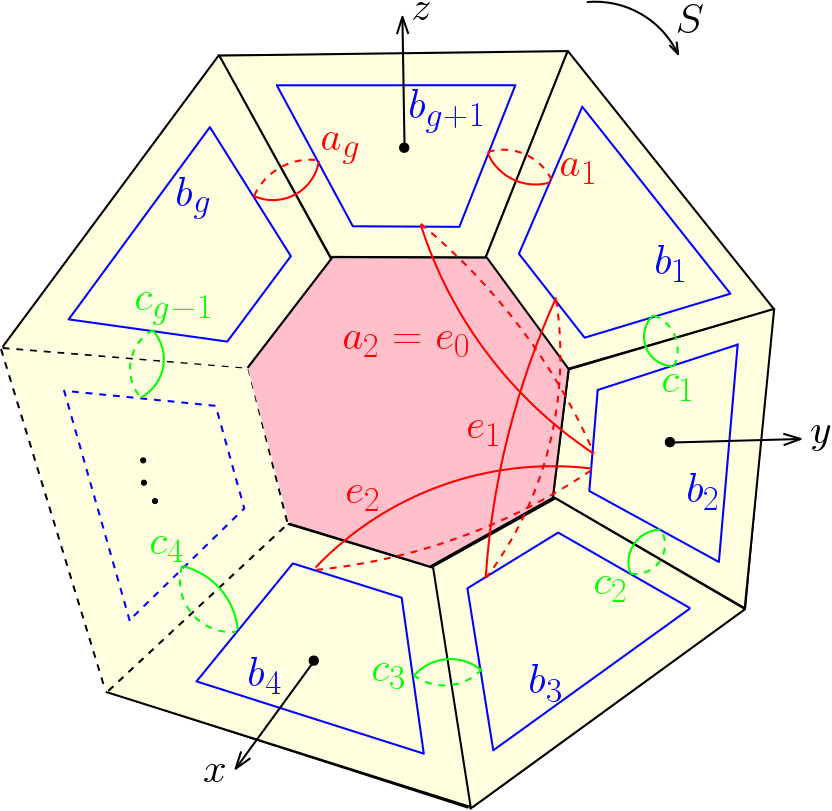}}
\caption{The rotation $S$ and the curves $a_1,a_2,a_g, b_i,c_i, e_i$.}
\label{rotation}
\end{center}
\end{figure}

\begin{figure}[hbt!]
\begin{center}
\scalebox{0.3}{\includegraphics{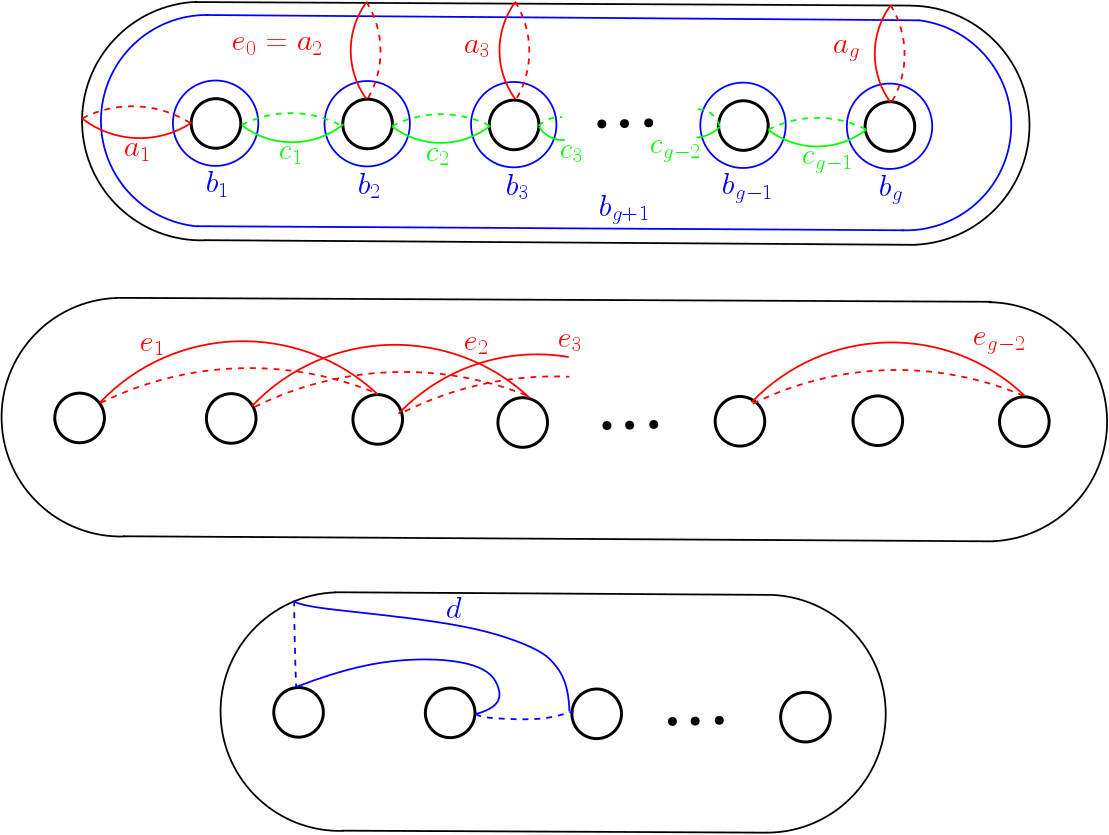}}
\caption{The curves $a_1,a_2,a_3,a_g, b_i,c_i, e_i$ and $d$.}
\label{curves}
\end{center}
\end{figure}

We consider the embedding of $\Sigma_{g}$ into $\mathbb{R}^{3}$ 
as depicted in Figure~\ref{rotation}.  This surface is the boundary of the solid handlebody consisting of two thickened $(g+1)$-gons, one placed in the $x=-1$ and the other in the $x=1$ plane, and $(g+1)$ solid handles joining their corresponding vertices (see also \cite[Figure 2]{bk}).  Observe also  that $\Sigma_{g}$ is embedded into $\mathbb{R}^{3}$ in such a way that it is invariant under the rotation, $S$, by $\dfrac{2\pi}{g+1}$.

Note that the rotation $S$ satisfies the following:
\begin{itemize}
\item [(i)] $S(a_g)=a_{1}$, $S(a_1)=c_{1}$,
\item [(ii)] $S(b_i)=b_{i+1}$ for $i=1,\ldots,g$, 
\item [(iii)]  $S(c_i)=c_{i+1}$ for $i=1,\ldots, g-2$, $S(c_{g-1})=a_g$ and
\item [(iii)] $S(e_i)=e_{i+1}$ for $i=0,\ldots, g-1$, $S(e_g)=e_0$.
\end{itemize}

Let us finish this section by stating a result from algebra \cite[Lemma~21]{y1}, which we use to determine the orders of torsion elements in our generating sets.
\begin{lemma}\label{order}
If $R$ is an element of order $k$ in a group $G$ and if $x$ and $y$ are elements in $G$ satisfying $RxR^{-1}= y$, then the order of $Rxy^{-1}$ is also $k$.   
\end{lemma}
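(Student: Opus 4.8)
The plan is to show that $Rxy^{-1}$ is conjugate to $R$ in $G$; since conjugate elements always have the same order, the order of $Rxy^{-1}$ will then automatically equal the order $k$ of $R$. This reduces the entire statement to a short, formal manipulation of words in $G$.

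To carry this out, I would first use the hypothesis to rewrite $y^{-1}$. Inverting the relation $RxR^{-1}=y$ gives $y^{-1}=Rx^{-1}R^{-1}$. Substituting this into the element of interest and regrouping yields
\[
Rxy^{-1}=Rx\cdot Rx^{-1}R^{-1}=(Rx)\,R\,(Rx)^{-1},
\]
so that $Rxy^{-1}$ is precisely the conjugate of $R$ by the element $Rx$.

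The final step is to invoke the elementary fact that conjugation by any fixed element of $G$ is an automorphism, and hence preserves the order of every element. Applying this to $(Rx)R(Rx)^{-1}$ shows that its order is exactly that of $R$, namely $k$, which is the desired conclusion. The argument is purely formal, so I do not anticipate any genuine obstacle; the only point demanding care is the bookkeeping with the inverse, where one must correctly deduce $y^{-1}=Rx^{-1}R^{-1}$ from $RxR^{-1}=y$ and then recognize the resulting word $RxRx^{-1}R^{-1}$ as the single conjugate $(Rx)R(Rx)^{-1}$ rather than as an unrelated product.
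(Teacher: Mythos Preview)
Your argument is correct: rewriting $y^{-1}=Rx^{-1}R^{-1}$ gives $Rxy^{-1}=(Rx)R(Rx)^{-1}$, which is conjugate to $R$ and therefore has the same order $k$. The paper itself does not supply a proof of this lemma but simply cites it from \cite[Lemma~21]{y1}; your one-line conjugation argument is exactly the standard proof one would expect and fills in this citation completely.
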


\section{Torsion generators for $\mod(\Sigma_{g})$}\label{S3}
Let us start this section by giving a set of generators consisting of the rotation $S$ inspired by the set of generators given by Korkmaz~\cite[Theorem~$5$]{mk1}.

\begin{theorem}\label{thm1}
If $g\geq3$, then the mapping class group $\mod(\Sigma_g)$ is generated by the four elements $S$, $A_1A_{2}^{-1}$, $B_1B_{2}^{-1}$, $C_1C_{2}^{-1}$.
\end{theorem}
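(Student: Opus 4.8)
The plan is to show that the subgroup $G=\langle S,\, A_1A_2^{-1},\, B_1B_2^{-1},\, C_1C_2^{-1}\rangle$ contains a known generating set of $\mod(\Sigma_{g})$ consisting of the rotation $S$ together with finitely many of the Dehn twists $A_i,B_i,C_i$, as provided by the Korkmaz-type set of~\cite[Theorem~5]{mk1}. Since that set generates $\mod(\Sigma_{g})$, it suffices to produce each of its Dehn-twist members inside $G$. Throughout I would use the conjugation rule $fAf^{-1}=B$ whenever $f(a)=b$.

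First I would exploit that $S\in G$: for every $k$ the conjugate $S^{k}(C_1C_2^{-1})S^{-k}$ equals the difference of the twists about $S^{k}(c_1)$ and $S^{k}(c_2)$, and similarly for the $A$- and $B$-differences. Reading off the orbits in (i)--(iv), the curve $a_1$ sweeps out, under $S$, the $(g+1)$-cycle $a_1\to c_1\to\cdots\to c_{g-1}\to a_g\to a_1$, and $c_1,c_2$ are consecutive on it. Hence the $S$-conjugates of $C_1C_2^{-1}$ furnish \emph{every} consecutive difference around this cycle; in particular $A_1C_1^{-1}=S^{-1}(C_1C_2^{-1})S$, $C_{g-1}A_g^{-1}=S^{g-2}(C_1C_2^{-1})S^{-(g-2)}$ and $A_gA_1^{-1}=S^{g-1}(C_1C_2^{-1})S^{-(g-1)}$ all lie in $G$. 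Doing the same with $B_1B_2^{-1}$ fills in the consecutive differences around the $b$-orbit, while the cross-orbit generator $A_1A_2^{-1}$ bridges the $a$/$c$-cycle, via $a_1$, to the separate orbit of $a_2$.

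These consecutive differences telescope automatically: multiplying $S$-conjugates in order gives $(X_0X_1^{-1})(X_1X_2^{-1})\cdots(X_{m-1}X_m^{-1})=X_0X_m^{-1}$, so $G$ contains every pairwise difference $XY^{-1}$ of twists about curves lying in a common $S$-orbit, and the bridging element then supplies cross-orbit differences as well. At this stage $G$ already controls all differences of twists, which is what one expects, since each such difference $t_at_b^{-1}$ with $b=f(a)$ is a commutator and therefore invisible in the (trivial, for $g\geq3$) abelianization of $\mod(\Sigma_{g})$.

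The hard part will be to isolate a single genuine Dehn twist, precisely because differences alone never escape the commutator subgroup. Here I would invoke a lantern relation among seven curves of the configuration: solving it for one of the interior twists and regrouping the remaining six twists into differences already shown to lie in $G$ — using the freedom provided by the pairwise differences and the symmetry $S$ — exhibits one honest Dehn twist, say $A_1$, as a word in elements of $G$; the hypothesis $g\geq3$ is what guarantees such a lantern is available. Once a single twist is in $G$ the rest follow by bootstrapping: from $A_1\in G$ and $A_1A_2^{-1}\in G$ we get $A_2=(A_1A_2^{-1})^{-1}A_1\in G$, and conjugating $A_1,A_2$ by powers of $S$ together with the $B$- and $C$-differences propagates twists to every curve in all three orbits. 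The Dehn twists thereby obtained, together with $S$, contain the chosen generating set, so $G=\mod(\Sigma_{g})$.
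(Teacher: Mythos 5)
Your overall architecture matches the paper's: reduce to the Humphries twists, accumulate differences $XY^{-1}$ of Dehn twists, use a lantern relation to extract one honest twist, then bootstrap. But there is a genuine gap at the step where you assert that ``$G$ already controls all differences of twists.'' Telescoping only yields differences of twists about curves in a \emph{common} $S$-orbit, and the orbits in play are (1) $a_1\to c_1\to\cdots\to c_{g-1}\to a_g$, (2) $b_1\to b_2\to\cdots$, and (3) $a_2=e_0\to e_1\to\cdots$. The bridge $A_1A_2^{-1}$ joins orbits (1) and (3) only; nothing in your argument produces a single difference linking the $b$-orbit to the others (e.g.\ $A_1B_1^{-1}$ or $B_1C_1^{-1}$), and these are indispensable both for reaching the Humphries set and for the lantern step. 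The paper obtains them by a non-telescoping device: it checks that the product of differences $(B_2B_3^{-1})(A_2A_1^{-1})$ carries $(b_2,b_3)$ to $(a_2,b_3)$, so conjugating $B_2B_3^{-1}$ by it yields the cross-orbit difference $A_2B_3^{-1}$. Such a conjugation, which depends on the intersection pattern $i(a_2,b_2)=1$ via $t_{b_2}t_{a_2}(b_2)=a_2$, is exactly what your telescoping-plus-bridging scheme omits.

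The same problem recurs, more seriously, in your lantern step. The lantern used in the paper is $A_1C_1C_2A_3=A_2E_1D$, solved as $A_3=(A_2C_1^{-1})(E_1C_2^{-1})(DA_1^{-1})$. The curve $d$ lies in none of the $S$-orbits of the labelled curves, so $DA_1^{-1}$ is not ``a difference already shown to lie in $G$'' under any telescoping scheme; the paper must construct an explicit element $\Gamma=(B_1C_2^{-1})(C_1C_2^{-1})(A_1C_2^{-1})(B_1C_2^{-1})\in G$ carrying $(e_1,c_2)$ to $(d,c_2)$ and conjugate $E_1C_2^{-1}$ by it --- and $\Gamma$ itself is built from the cross-orbit differences $B_1C_2^{-1}$ and $A_1C_2^{-1}$ that your argument has not produced. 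So the phrase ``regrouping the remaining six twists into differences already shown to lie in $G$'' is precisely where the real work lies; as written, the proof does not go through.
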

\begin{proof}
First, we recall that by \cite{H}, 
$
\lbrace A_1, A_2, B_1, \ldots, B_g, C_1, \ldots C_{g-1} \rbrace
$
generates $\mod(\Sigma_g)$.  We refer to the elements of this set as Humphries generators.

Let $G$ be the subgroup of $\mod(\Sigma_g)$ generated by the set
\[
\lbrace S,  A_1A_{2}^{-1}, B_1B_{2}^{-1}, C_1C_{2}^{-1} \rbrace.
\]
It is easy to see that $S$ maps the curves $(b_1,b_2)$ to $(b_2,b_3)$. Thus, we get the element
\[
B_2B_3^{-1}=S(B_1B_2^{-1})S^{-1}\in G.
\]
One can also check that the diffeomorphism $(B_2B_3^{-1})(A_2A_1^{-1})$ satisfies
\[
(B_2B_3^{-1})(A_2A_1^{-1})(b_2,b_3)=(a_2,b_3).
\]
As a result we obtain,  
\[
A_2B_3^{-1}=\big((B_2B_3^{-1})(A_2A_1^{-1})\big)(B_2B_3^{-1})\big((B_2B_3^{-1})(A_2A_1^{-1})\big)^{-1}\in  G,
\]
because of the fact that each factor on the right hand side is contained in $G$.\\
From these we have
\begin{eqnarray*}
A_1B_2^{-1}&=&(A_1A_2^{-1})(A_2B_3^{-1})(B_3B_2^{-1})\in G \textrm{ and so}\\
A_1B_1^{-1}&=&(A_1B_2^{-1})(B_2B_1^{-1})\in G.
\end{eqnarray*}
Since $S(a_1,b_1)=(c_1,b_2)$, we have
\[
C_1B_2^{-1}=S(A_1B_1^{-1})S^{-1}\in G.
\]
Using this element, we get
\[
B_1C_1^{-1}=(B_1B_2^{-1})(B_2C_1^{-1})\in G,
\]
which implies that $B_1C_1^{-1}, B_2C_2^{-1},\ldots, B_{g-1}C_{g-1}^{-1}$ are all contained in $G$ by the action of $S$.

The following elements are also contained in $G$:
\begin{eqnarray*}
A_1C_1^{-1}&=&(A_1B_2^{-1})(B_2C_1^{-1}),\\
C_1A_2^{-1}&=&(C_1A_1^{-1})(A_1A_2^{-1}),\\
C_2A_1^{-1}&=&(C_2C_1^{-1})(C_1A_1^{-1}) \textrm{ and}\\
C_2B_1^{-1}&=&(C_2A_1^{-1})(A_1B_1^{-1}).
\end{eqnarray*}

Since $S(a_2,c_1)=(e_1,c_2)$, we have
\[
E_1C_2^{-1}=S(A_2C_1^{-1})S^{-1}\in G.
\]

Moreover, the diffeomorphism
\[
\Gamma=(B_1C_2^{-1})(C_1C_2^{-1})(A_1C_2^{-1})(B_1C_2^{-1})\in G,
\]
and it maps $(e_1,c_2)$ to $(d,c_2)$. This implies that
\[
DC_2^{-1}=\Gamma(E_1C_2^{-1})\Gamma^{-1}\in G.
\]

The subgroup $G$ contains 
\[
DA_1^{-1}=(DC_2^{-1})(C_2A_1^{-1}).
\]

The following relation holds in $\mod(\Sigma_g)$ by the lantern relation:
\[
A_1C_1C_2A_3=A_2E_1D,
\]
where the curves are depicted in Figure~\ref{curves}.  Note that, this can be rewritten as
\[
A_3=(A_2C_1^{-1})(E_1C_2^{-1})(DA_1^{-1}),
\]
which is clearly contained in $G$. Hence, it follows from
\[
A_3(B_3B_2^{-1})(a_3)=b_3,
\]
that we get 
\[
B_3=\big(A_3(B_3B_2^{-1})\big)(A_3)\big(A_3(B_3B_2^{-1})\big)^{-1}\in G,
\]
which implies $B_1,B_2,\ldots, B_{g+1}$ are all contained in $G$ by the action of $S$. Moreover, we obtain 
\begin{eqnarray*}
C_1&=&(C_1B_1^{-1})B_1\in G, \\
A_1&=&(A_1B_1^{-1})B_1\in G\textrm{ and so}\\
A_2&=&(A_2A_1^{-1})A_1\in G.\
\end{eqnarray*}

Also, by conjugating $C_1$ with powers of $S$, we can see that $C_2,\ldots, C_{g-1}$ are in $G$. This finishes the proof since all Humphries generators are contained in $G$.
\end{proof}

In the following theorem, we present a generating set for $\mod(\Sigma_g)$ consisting of two elements of certain orders.

\begin{theorem}\label{lem1}
If $g\geq 14$, then $\mod(\Sigma_g)$ can be generated by the two elements $S$ and $S^2 A_g B_6A_2E_2^{-1}B_8^{-1}C_1^{-1}$.  The order of $S$ is $g+1$, and the order of $S^2 A_gB_6A_2E_2^{-1}B_8^{-1}C_1^{-1}$ is $g+1$ or $\frac{g+1}{2}$ when $g$ is even or odd, respectively.
\end{theorem}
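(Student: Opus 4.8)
The plan is to prove two separate assertions: that the two elements generate $\mod(\Sigma_g)$, and that the order of $W:=S^2A_gB_6A_2E_2^{-1}B_8^{-1}C_1^{-1}$ is as claimed (the order of $S$ itself is clear, since $S$ is the rotation by $2\pi/(g+1)$). For the generation statement I would reduce everything to Theorem \ref{thm1}: it suffices to show that the subgroup $G=\langle S,W\rangle$ contains the four elements $S$, $A_1A_2^{-1}$, $B_1B_2^{-1}$, $C_1C_2^{-1}$. Since $S\in G$ already, and since $S^{-2}W=A_gB_6A_2E_2^{-1}B_8^{-1}C_1^{-1}=:P$ lies in $G$, the whole problem becomes the task of manufacturing the three consecutive twist-differences out of $P$ and the rotation $S$.

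For the order I would apply Lemma \ref{order} with $R=S^2$ and $x=A_gB_6A_2$. Using the rotation data $S(a_g)=a_1$, $S(a_1)=c_1$, $S(b_i)=b_{i+1}$ together with $S(a_2)=e_1$ and $S(e_i)=e_{i+1}$ (the last two read off from Figures \ref{rotation}--\ref{curves} and from the computation $S(a_2,c_1)=(e_1,c_2)$ already used in Theorem \ref{thm1}), I obtain $S^2(a_g)=c_1$, $S^2(b_6)=b_8$, $S^2(a_2)=e_2$, so that
\[
y:=S^2xS^{-2}=C_1B_8E_2,\qquad y^{-1}=E_2^{-1}B_8^{-1}C_1^{-1}.
\]
Hence $W=S^2xy^{-1}=Rxy^{-1}$ with $RxR^{-1}=y$, and Lemma \ref{order} gives $\mathrm{ord}(W)=\mathrm{ord}(S^2)=(g+1)/\gcd(2,g+1)$, which is $g+1$ when $g$ is even and $\tfrac{g+1}{2}$ when $g$ is odd, exactly as stated. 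This step is routine once the three identities for the $S^2$-action are confirmed.

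For the generation step I would first record that under $S$ the curves fall into three orbits of length $g+1$: the $a/c$-orbit $a_g,a_1,c_1,\dots,c_{g-1}$, the $b$-orbit $b_1,\dots,b_{g+1}$, and the $e$-orbit $a_2=e_0,e_1,\dots,e_g$ (the identification $a_2=e_0$ following from $S(a_2)=e_1=S(e_0)$). Reading the figure, the six curves occurring in $P$ are pairwise disjoint, so their twists commute and $P$ factors as $(A_gC_1^{-1})(B_6B_8^{-1})(A_2E_2^{-1})$, a single two-apart difference inside each orbit. Conjugating $P$ by powers of $S$ shifts all three orbits simultaneously, and multiplying consecutive $S^2$-conjugates telescopes each block; combining such products with the nonabelian conjugation move $fXf^{-1}=Y$ valid whenever $f(x)=y$ (the very mechanism driving the proof of Theorem \ref{thm1}, together with the lantern relation) I would separate the three blocks and convert the two-apart differences into the consecutive differences $A_1A_2^{-1}$, $B_1B_2^{-1}$, $C_1C_2^{-1}$.

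The hard part is precisely this disentangling. Because $S$ rotates the three orbits in lockstep, the blocks cannot be isolated by commuting manipulations of $P$ and its $S$-conjugates alone; one is forced to introduce the conjugation moves (and auxiliary elements, as in Theorem \ref{thm1}) to split the orbits apart and then to pass from ``two-apart'' to ``consecutive'' differences. This final reduction is where the parity of $g$ enters: for $g$ even the orbits have odd length $g+1$, so repeatedly stepping by $2$ visits every position and the argument closes cleanly, whereas for $g$ odd stepping by $2$ preserves a parity class within each orbit, so one must bring in odd powers of $S$ and cross-orbit relations to bridge the two classes. I expect essentially all of the combinatorial work, and whatever case analysis is needed, to live in this last extraction.
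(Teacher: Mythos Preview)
Your treatment of the order of $W$ is correct and matches the paper exactly: you apply Lemma~\ref{order} with $R=S^2$, $x=A_gB_6A_2$, $y=C_1B_8E_2$, and conclude $\mathrm{ord}(W)=\mathrm{ord}(S^2)$.  Likewise, your overall strategy for generation---reduce to Theorem~\ref{thm1} by exhibiting $A_1A_2^{-1}$, $B_1B_2^{-1}$, $C_1C_2^{-1}$ in $\langle S,P\rangle$---is the paper's strategy as well.

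However, the generation part of your proposal has a genuine gap and a misconception.  The gap is that you never perform the ``disentangling,'' which is the entire content of the proof; telescoping $S^2$-conjugates of $P$, as you note yourself, keeps the three orbit-blocks locked together and produces nothing new.  The paper's mechanism is different and concrete: one does \emph{not} telescope, but rather forms specific products like $F_1F_2^{-1}$ (with $F_2=S^2F_1S^{-2}$) and observes that this diffeomorphism moves exactly one of the six curves in $F_1$, namely $b_6\mapsto e_4$, while fixing the other five.  Conjugating $F_1$ by $F_1F_2^{-1}$ then yields $F_3=A_gE_4A_2E_2^{-1}B_8^{-1}C_1^{-1}$, and $F_1F_3^{-1}=B_6E_4^{-1}$ is a single cross-orbit difference.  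Repeating this trick with $F_4=S^5F_1S^{-5}$ (an \emph{odd} shift, used regardless of the parity of $g$) extracts $B_6C_6^{-1}$; a further round with auxiliary elements $F_6,F_7,F_8$ extracts $B_6C_5^{-1}$.  From $B_iC_i^{-1}$ and $B_{i+1}C_i^{-1}$ the consecutive differences $B_1B_2^{-1}$ and $C_1C_2^{-1}$ follow immediately, and $A_1A_2^{-1}$ comes via $A_1B_1^{-1}$ and $B_2A_2^{-1}$.  No lantern relation is used here; that relation lives entirely inside the proof of Theorem~\ref{thm1}, which you are invoking as a black box.

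The misconception is your belief that the parity of $g$ enters the generation argument.  It does not: the paper's proof is completely uniform for $g\ge 14$, and odd shifts such as $S^5$ and $S^{-7}$ are used throughout regardless of parity.  The parity of $g$ affects only $\mathrm{ord}(S^2)$, hence only the order of $W$.  The genus bound $g\ge 14$ comes instead from disjointness constraints: for instance, when one forms $F_4=S^5F_1S^{-5}$ and checks that $F_1F_4^{-1}$ moves only $b_6$, this requires the curves $b_{13},e_7,c_6,c_4,b_{11},e_5$ to be disjoint from $a_g,a_2,e_2,b_8,c_1$, which fails for $g\le 13$.
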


\begin{proof}
Let $H$ be the subgroup of  $\mod(\Sigma_g)$ generated by $S$ and $S^2F_1$, where  $F_1:=A_gB_6A_2E_2^{-1}B_8^{-1}C_1^{-1}$.  It is clear that $F_1\in H$. We then have the following element, we call $F_2$, in $H$.
\[
F_2:=S^2F_1S^{-2}=C_1B_8E_2E_4^{-1}B_{10}^{-1}C_3^{-1}.
\]
One can check that $F_1F_2^{-1}(a_g,b_6,a_2,e_2,b_8,c_1)=(a_g,e_4,a_2,e_2,b_8,c_1)$, which gives rise to the following element
\[
F_3:=(F_1F_2^{-1})F_1(F_1F_2^{-1})^{-1}=A_gE_4A_2E_2^{-1}B_8^{-1}C_1^{-1}\in H.
\]
From these we get $F_1F_3^{-1}=B_6E_4^{-1}\in H$, which implies
\begin{eqnarray*}
B_4E_2^{-1}=S^{-2}(F_1F_3^{-1})S^2\in H.
\end{eqnarray*}
We also obtain
\[
F_4:=S^5F_1S^{-5}=C_4B_{11}E_5E_7^{-1}B_{13}^{-1}C_6^{-1}\in H.
\]
Moreover, it can be verified that $F_1F_4^{-1}(a_g,b_6,a_2,e_2,b_8,c_1)=(a_g,c_6,a_2,e_2,b_8,c_1)$, which gives the following element
\[
F_5:=(F_1F_4^{-1})F_1(F_1F_4^{-1})^{-1}=A_gC_{6}A_2E_2^{-1}B_{8}^{-1}C_1^{-1}\in H.
\]
\begin{remark}
 If $g=13$, then not only the curves $b_6$ and $c_6$ but also the curves $a_g$ and $b_{13}$ would intersect.  As a result, we do not have the expression stated above for $F_5$ if $g=13$.  Similarly, for $g<13$, there are unwanted intersections which prevent the applicability of our method.   
\end{remark}

We then have $B_6C_6^{-1}=F_1F_5^{-1}\in H$. As a result, by the action of $S$, we can conclude that $B_1C_1^{-1}, B_2C_2^{-1},\ldots, B_{g-1}C_{g-1}^{-1}$ are all contained in the subgroup $H$.

The subgroup $H$ also contains the elements 
\begin{eqnarray*}
B_gA_g^{-1}&=&S^{-7}(F_1F_5^{-1})S^7 \textrm{and so}\\
F_6:&=&(B_gA_g^{-1})F_1(E_2B_4^{-1})\\
&=&(B_gA_g^{-1})(A_gB_6A_2E_2^{-1}B_8^{-1}C_1^{-1})(E_2B_4^{-1})\\
&=&B_gB_6A_2B_4^{-1}B_8^{-1}C_1^{-1}
\end{eqnarray*}
and
\begin{eqnarray*}
F_7:&=&(B_6E_4^{-1})(S^4F_6S^{-4})\\
&=&(B_6E_4^{-1})(B_3B_{10}E_4B_8^{-1}B_{12}^{-1}C_5^{-1})\\
&=&B_3B_{10}B_6B_8^{-1}B_{12}^{-1}C_5^{-1}\in H.
\end{eqnarray*}
Since $F_6F_7^{-1}(b_g,b_6,a_2,b_4,b_8,c_1)=(b_g,c_5,a_2,b_4,b_8,c_1)$, the conjugation of $F_6$ with $F_6F_7^{-1}$ gives
\[
F_8:=(F_6F_7^{-1})F_6(F_6F_7^{-1})^{-1}=B_gC_5A_2B_4^{-1}B_8^{-1}C_1^{-1}\in H.
\]
From this, we have
\[
B_6C_5^{-1}=F_6F_8^{-1}\in H,
\]
which implies that $B_{2}C_1^{-1}, B_{3}C_2^{-1},\ldots, B_gC_{g-1}^{-1}$ are in $H$ by conjugation by powers of $S$. Hence, we obtain
\begin{eqnarray*}
C_1C_2^{-1}&=&(C_1B_2^{-1})(B_2C_2^{-1})\in H \textrm{ and}\\
B_1B_2^{-1}&=&(B_1C_1^{-1})(C_1B_2^{-1})\in H.
\end{eqnarray*}
By Theorem~\ref{thm1}, it remains to prove that $A_1A_2^{-1}$ is contained in $H$.
\begin{eqnarray*}
A_1B_1^{-1}&=&S^{-1}(C_1B_2^{-1})S\in H,\\
B_2A_2^{-1}&=&S^{-2}(B_4E_2^{-1})S^2 \in H \textrm{ and so}\\
A_1A_2^{-1}&=&(A_1B_1^{-1})(B_1B_2^{-1})(B_2A_2^{-1})\in H,
\end{eqnarray*}
which completes the first part of the proof.

For the order of $S^2 A_gB_6A_2E_2^{-1}B_8^{-1}C_1^{-1}$, first note that $S^2$ maps the curves $a_g$, $b_6$ and $a_2$ to the curves $c_1$, $b_8$ and $e_2$, respectively.  Now, since the order of $S^2$ is $g+1$ when $g$ is even and $\frac{g+1}{2}$ when $g$ is odd, our conclusion about the orders of the generators follows immediately from Lemma~\ref{order}.
\end{proof}

In the following theorem, we present another generating set  consisting of two elements of certain orders.

\begin{theorem}\label{lem2}
If $g\geq 16$, then $\mod(\Sigma_g)$ can be generated by the two elements $S$ and $S^3A_gE_0B_8B_{11}^{-1}E_3^{-1}C_2^{-1}$ of orders $g+1$ and $\frac{g+1}{3}$ or $g+1$ if $g+1$ is divisible by $3$ or not, respectively.
\end{theorem}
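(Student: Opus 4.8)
The plan is to follow the exact same template that worked for Theorem~\ref{lem1}, now with the generator $S^3 F_1$ where $F_1 := A_g E_0 B_8 B_{11}^{-1} E_3^{-1} C_2^{-1}$, and to reduce the verification to Theorem~\ref{thm1} by producing all four elements $S$, $A_1A_2^{-1}$, $B_1B_2^{-1}$, $C_1C_2^{-1}$ inside the subgroup $H = \langle S, S^3 F_1 \rangle$. First I would record that $F_1 \in H$ (since $S \in H$ and $S^3 F_1 \in H$), and then generate conjugates $F_2 := S^3 F_1 S^{-3}$, $F_3 := S^6 F_1 S^{-6}$, and similar, computing their expressions via the action rules (i)--(iii) for $S$ in Section~\ref{S2}. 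The engine of the argument is the \emph{commutator-and-conjugate trick}: whenever two of the six-factor words $F_i, F_j$ agree in all but one curve, the product $F_i F_j^{-1}$ is a \emph{two-factor} element of the form $XY^{-1}$, and conjugating $F_1$ by such a short element (which moves exactly one of the six curves) replaces a single factor of $F_1$, yielding a new six-factor word $F_k \in H$. Iterating this peels off the factors one at a time until I isolate elementary two-factor differences such as $B_iC_i^{-1}$, $B_iC_{i-1}^{-1}$, and $B_iE_j^{-1}$.

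The key steps, in order, are as follows. Step one: extract a relation of the form $B_iC_i^{-1} \in H$ (the analogue of $B_6C_6^{-1}$ in Theorem~\ref{lem1}), whence the full family $B_1C_1^{-1}, \dots, B_{g-1}C_{g-1}^{-1}$ lies in $H$ by applying powers of $S$ and using (ii)--(iii). Step two: via a separately engineered word $F_6$ (obtained by pre- and post-multiplying $F_1$ by the already-available two-factor elements, exactly as $F_6 = (B_gA_g^{-1})F_1(E_2B_4^{-1})$ was built), produce an element of the shifted family $B_iC_{i-1}^{-1} \in H$, giving $B_2C_1^{-1}, \dots, B_gC_{g-1}^{-1}$. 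Step three: combine these two families to get $C_1C_2^{-1} = (C_1B_2^{-1})(B_2C_2^{-1})$ and $B_1B_2^{-1} = (B_1C_1^{-1})(C_1B_2^{-1})$, both in $H$. Step four: conjugate by powers of $S$ to obtain $A_1B_1^{-1}$ and $B_2A_2^{-1}$, and finally assemble $A_1A_2^{-1} = (A_1B_1^{-1})(B_1B_2^{-1})(B_2A_2^{-1}) \in H$. By Theorem~\ref{thm1} this completes the generation claim.

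For the order statement I would invoke Lemma~\ref{order} exactly as before. The relevant observation is that $S^3$ sends the curves $a_g, b_8, e_3$ to $c_2, b_{11}, e_0$ respectively (using (i)--(iii) for three applications of $S$), so $S^3 F_1 = S^3 \cdot x \cdot y^{-1}$ has the form $RxR^{-1}$-corrected word to which Lemma~\ref{order} applies with $R = S^3$; since $S^3$ has order $g+1$ when $3 \nmid g+1$ and order $\frac{g+1}{3}$ when $3 \mid g+1$, the order of the second generator is as stated. The main obstacle, and the point requiring genuine care rather than routine bookkeeping, is verifying the non-intersection (disjointness) conditions at each conjugation: the trick only produces the clean single-factor replacement when the curve being moved is disjoint from the five curves that must stay fixed, and these geometric conditions are precisely what fails for small $g$. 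This is why the hypothesis is $g \geq 16$ here rather than $g \geq 14$ as in Theorem~\ref{lem1}; I would expect a remark analogous to the one following $F_5$ in the previous proof, pinning down exactly which pair of curves (likely involving the larger indices $b_{11}, e_3$ and $a_g$) would collide for $g = 15$ or below and thereby break the argument. Confirming each $F_iF_j^{-1}$ is genuinely a two-factor word, and that the curves it moves are the intended ones, is the crux of the calculation.
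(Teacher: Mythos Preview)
Your outline is correct and follows essentially the same strategy as the paper's proof: conjugate the six-factor word by powers of $S$, use the single-curve-replacement trick to peel off two-factor elements, and reduce to the generating set of Theorem~\ref{thm1}. Two small corrections: in the paper the first element extracted is of $B$--$E$ type ($B_8E_6^{-1}$), not $B_iC_i^{-1}$, and the $B_iC_i^{-1}$ family actually comes \emph{last} after $B_iC_{i-1}^{-1}$ (the paper also uses conjugation by $S^2$ and $S^4$, not just $S^3$, at the intermediate steps); and in your order argument the action of $S^3$ is $(a_g,e_0,b_8)\mapsto(c_2,e_3,b_{11})$, not $e_3\mapsto e_0$ as you wrote.
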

\begin{proof}
Let us denote the factorization of Dehn twist $A_gE_0B_8B_{11}^{-1}E_3^{-1}C_2^{-1}$ by $K_1$ and
$K$ be the subgroup of $\mod(\Sigma_g)$ generated by $S$ and $S^3K_1$. Hence, it is easy to see that $K_1\in K$. We have the following element
\[
K_2:=S^3K_1S^{-3}=C_2E_3B_{11}B_{14}^{-1}E_6^{-1}C_2^{-1}\in K.
\]
One can observe that $K_1K_2^{-1}$ sends the curves $(a_g,e_0,b_8,b_{11},e_3,c_2)$ to $(a_g,e_0,e_6,b_{11},e_3,c_2)$. Then we get
\[
K_3:=(K_1K_2^{-1})K_1(K_1K_2^{-1})^{-1}=A_gE_0E_6B_{11}^{-1}E_3^{-1}C_2^{-1}\in K.
\]
From these, the subgroup $K$ contains $K_1K_3^{-1}=B_8E_6^{-1}$, which implies that
\begin{eqnarray*}
B_5E_3^{-1}&=&S^{-3}(B_8E_6^{-1})S^3\in K \textrm{ and so}\\
B_2E_0^{-1}&=&S^{-3}(B_5E_3^{-1})S^3\in K.
\end{eqnarray*}
We then have 
\begin{eqnarray*}
K_4&:=&K_1(E_3B_5^{-1})=A_gE_0B_8B_{11}^{-1}B_5^{-1}C_2^{-1}\in K \textrm{ and} \\ 
K_5&:=&S^2K_4S^{-2}=C_1E_2B_{10}B_{13}^{-1}B_7^{-1}C_4^{-1}\in K.
\end{eqnarray*}
Consider the diffeomorphism $K_4K_5$ mapping the curves $(a_g,e_0,b_8,b_{11},b_5,c_2)$ to $(a_g,e_0,b_8,b_{11},c_4,c_2)$, which gives the element
\[
K_6:=(K_4K_5)K_4(K_4K_5)^{-1}=A_gE_0B_8B_{11}^{-1}C_4^{-1}C_2^{-1}\in K.
\]
Hence, $K_4^{-1}K_6=B_5C_4^{-1}$ is contained in $K$. And so, we obtain
\begin{eqnarray*}
B_2C_1^{-1}&=&S^{-3}(B_5C_4^{-1})S^3\in K \textrm{ and}\\
B_1A_1^{-1}&=&S^{-1}(B_2C_1^{-1})S\in K.
\end{eqnarray*}
Moreover, we have
\begin{eqnarray*}
    K_7&:=&(C_1B_2^{-1})(B_2E_0^{-1})K_6\\
    &=&(C_1E_0^{-1})A_gE_0B_8B_{11}^{-1}C_4^{-1}C_2^{-1}\\
     &=& A_gC_1B_8B_{11}^{-1}C_4^{-1}C_2^{-1}\in K
\end{eqnarray*}
    and 
\begin{eqnarray*}
K_8:=S^4K_7S^{-4}=C_3C_5B_{12}B_{15}^{-1}C_8^{-1}C_6^{-1}\in K.
\end{eqnarray*}
It follows $K_7K_8^{-1}$ maps $(a_g,c_1,b_8,b_{11},c_4,c_2)$ to $(a_g,c_1,c_8,b_{11},c_4,c_2)$ that we get
\[
K_9:=(K_7K_8^{-1})K_7(K_7K_8^{-1})^{-1}=A_gC_1C_8B_{11}^{-1}C_4^{-1}C_2^{-1}\in K.
\]
This implies that $K_7K_9^{-1}=B_8C_8^{-1}\in K$ and also
\[
B_1C_1^{-1}=S^{-7}(B_8C_8^{-1})S^7\in K \textrm{ and }B_2C_2^{-1}=S^{-6}(B_8C_8^{-1})S^6\in K.
\]
We finally have the following elements
\begin{eqnarray*}
B_1B_2^{-1}&=&(B_1C_1^{-1})(C_1B_2^{-1}),\\
C_1C_2^{-1}&=&(C_1B_1^{-1})(B_1B_2^{-1})(B_2C_2^{-1})\textrm{ and}\\
A_1A_2^{-1}&=&A_1E_0^{-1}=(A_1B_1^{-1})(B_1B_2^{-1})(B_2E_0^{-1}),
\end{eqnarray*}
which are all contained in $K$. This finishes the proof by Theorem~\ref{thm1}. 
\end{proof}


\section{Torsion generators for $\mod^{*}(\Sigma_{g})$}\label{S4}
Let $R$ be the reflection across the plane $x=0$ of $\Sigma_{g}$ for the model in Figure~\ref{rotation}. Let us consider the orientation reversing diffeomorphism $SR$, denoted by $T$. Note that $T$ satisfies $T(\alpha)= S(\alpha)$ where $\alpha$ is one of the  simple closed curves in  Figure~\ref{curves}, except the curve $d$.

In the proof of the following theorem, we mainly follow the proof of Theorem~\ref{thm1}.

\begin{theorem}\label{thm2}
If $g\geq3$, then $\mod^{*}(\Sigma_{g})$ is generated by the four elements $T$, $A_1A_{2}^{-1}$, $B_1B_{2}^{-1}$, $C_1C_{2}^{-1}$.
\end{theorem}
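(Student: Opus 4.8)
The plan is to run the argument of Theorem~\ref{thm1} almost verbatim, with the rotation $S$ replaced throughout by the orientation-reversing map $T$, and then to finish with an index-two argument. Write $G^{*}$ for the subgroup of $\mod^{*}(\Sigma_{g})$ generated by $T$, $A_1A_2^{-1}$, $B_1B_2^{-1}$, $C_1C_2^{-1}$. The first goal is to show that $G^{*}$ contains every Humphries generator, hence all of $\mod(\Sigma_g)$; the second is to upgrade this to $\mod^{*}(\Sigma_g)$.

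The one genuinely new feature is the conjugation formula for $T$. Since $T=SR$ with $R$ an orientation-reversing reflection fixing every curve of Figure~\ref{curves} other than $d$ setwise (indeed $T(\alpha)=S(\alpha)$ forces $R(\alpha)=\alpha$ for these $\alpha$), for such a curve $\gamma$ we have $T t_\gamma T^{-1}=t_{T(\gamma)}^{-1}=t_{S(\gamma)}^{-1}$; that is, conjugation by $T$ produces the \emph{inverse} twist about $S(\gamma)$ rather than the twist itself. I would record the key consequence: if $w=t_x t_y^{-1}$ with $x,y$ disjoint, so that the two twists commute, then $TwT^{-1}=(SwS^{-1})^{-1}$. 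Hence for every such $w\in G^{*}$ the element $SwS^{-1}$ also lies in $G^{*}$, because $TwT^{-1}\in G^{*}$ and $G^{*}$ is closed under inversion. This is exactly what is needed to simulate each step of Theorem~\ref{thm1} in which a difference of two commuting twists is conjugated by $S$: the same membership conclusion is reached in $G^{*}$, up to an overall inverse that is invisible at the level of subgroup membership. Conjugations by the orientation-preserving products of Dehn twists in that proof (for instance by $(B_2B_3^{-1})(A_2A_1^{-1})$ or by $\Gamma$) are unaffected, since for orientation-preserving $f$ one still has $f t_\gamma f^{-1}=t_{f(\gamma)}$, and these steps carry over word for word.

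With this dictionary I would reproduce the derivation of Theorem~\ref{thm1} line by line: first obtain $B_2B_3^{-1}$, then $A_2B_3^{-1}$, $A_1B_2^{-1}$, $A_1B_1^{-1}$, $C_1B_2^{-1}$, the family $B_iC_i^{-1}$, and $E_1C_2^{-1}$, each time using either an orientation-preserving conjugation or the inverse-closure trick above. The delicate point is the curve $d$, the single curve on which $T$ and $S$ disagree; I must ensure $d$ is never produced by applying $T$. In Theorem~\ref{thm1} this is automatic, since $DC_2^{-1}$ is obtained by conjugating $E_1C_2^{-1}$ by the orientation-preserving element $\Gamma$ with $\Gamma(e_1)=d$, so $d$ enters only through a product of Dehn twists. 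Consequently the lantern relation $A_1C_1C_2A_3=A_2E_1D$ still yields $A_3=(A_2C_1^{-1})(E_1C_2^{-1})(DA_1^{-1})\in G^{*}$, and from $A_3$ one recovers all $B_i$, then $C_1,A_1,A_2$ and the remaining $C_i$ exactly as before (the $S$-conjugations of the single twists $C_1$ and $B_3$ cause no trouble, as a single twist commutes with itself). Hence $\mod(\Sigma_g)\le G^{*}$.

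Finally, since $T$ is orientation-reversing we have $T\in G^{*}\setminus\mod(\Sigma_g)$, so $G^{*}$ strictly contains the index-two subgroup $\mod(\Sigma_g)$, forcing $G^{*}=\mod^{*}(\Sigma_g)$. I expect the main obstacle to be the sign bookkeeping of the third paragraph: one must verify, at every conjugation where $T$ replaces $S$, that the element being conjugated is a difference of twists about disjoint curves, so that the orientation reversal collapses to a harmless global inverse, and one must isolate the $d$-step as the only place where the discrepancy between $T$ and $S$ could conceivably matter. Everything else is a formal transcription of Theorem~\ref{thm1}.
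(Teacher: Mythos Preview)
Your approach is the paper's: replace $S$ by $T$, use $T t_\gamma T^{-1}=t_{S(\gamma)}^{-1}$, absorb the resulting inverses via commutativity whenever the two twists involved are about disjoint curves, and finish with the index-two argument. There is, however, one step at which the literal line-by-line transcription of Theorem~\ref{thm1} fails your own disjointness test. In Theorem~\ref{thm1} the element $C_1B_2^{-1}$ is obtained as $S(A_1B_1^{-1})S^{-1}$, but $a_1$ and $b_1$ intersect (since $S(a_1,b_1)=(c_1,b_2)$ and $i(c_1,b_2)=1$), so $A_1$ and $B_1$ do not commute and $T(A_1B_1^{-1})T^{-1}=C_1^{-1}B_2$ is \emph{not} the inverse of $C_1B_2^{-1}$. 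The paper's fix is the natural one, and you already have the ingredients for it: conjugate $A_1B_2^{-1}$ (here $a_1$ and $b_2$ \emph{are} disjoint) by $T$ to get $C_1^{-1}B_3=B_3C_1^{-1}$, hence $C_1B_3^{-1}\in G^{*}$, and then recover $C_1B_2^{-1}=(C_1B_3^{-1})(B_3B_2^{-1})$. With this single detour inserted, your argument coincides with the paper's proof.
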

\begin{proof}
Note that one can generate $\mod^{*}(\Sigma_g)$ by adding an orientation reversing diffeomorphism to the set of Dehn twists
$
\lbrace A_1, A_2, B_1, \ldots, B_g, C_1, \ldots, C_{g-1} \rbrace.
$

Let $G$ be the subgroup of $\mod^{*}(\Sigma_g)$ generated by the set
\[
\lbrace T,  A_1A_{2}^{-1}, B_1B_{2}^{-1}, C_1C_{2}^{-1} \rbrace.
\]
Since $T$ is an orientation reversing element, it is enough to show that the subgroup $G$ contains the Dehn twists
$
 A_1, A_2, B_1, \ldots, B_g, C_1, \ldots, C_{g-1}
$

Observe that $T$ maps the curves $(b_2,b_1)$ to $(b_3,b_2)$ with reverse orientation. Thus,
\[
B_3^{-1}B_2=T(B_2B_1^{-1})T^{-1}\in G, 
\]
which implies that $B_2B_3^{-1}\in G$ by the commutativity.
One can verify that the diffeomorphism $(B_2B_3^{-1})(A_2A_1^{-1})$ satisfies
\[
(B_2B_3^{-1})(A_2A_1^{-1})(b_2,b_3)=(a_2,b_3).
\]
We then obtain,  
\[
A_2B_3^{-1}=\big((B_2B_3^{-1})(A_2A_1^{-1})\big)(B_2B_3^{-1})\big((B_2B_3^{-1})(A_2A_1^{-1})\big)^{-1}\in  G.
\]
From these we get
\begin{eqnarray*}
A_1B_2^{-1}&=&(A_1A_2^{-1})(A_2B_3^{-1})(B_3B_2^{-1})\in G \textrm{ and so}\\
A_1B_1^{-1}&=&(A_1B_2^{-1})(B_2B_1^{-1})\in G.
\end{eqnarray*}
It follows from $T(a_1,b_2)=(c_1,b_3)$ that we get
\[
C_1^{-1}B_3=T(A_1B_2^{-1})T^{-1}\in G.
\]
This implies that $C_1B_3^{-1}\in G$ using the commutativity of the Dehn twists $C_1$ and $B_3$. 
Using this, we also get
\begin{eqnarray*}
C_1B_2^{-1}&=&(C_1B_3^{-1})(B_3B_2^{-1})\in G \textrm{ and so}\\
B_1C_1^{-1}&=&(B_1B_2^{-1})(B_2C_1^{-1})\in G.  
\end{eqnarray*}
The subgroup $G$ contains the following elements:
\begin{eqnarray*}
A_1C_1^{-1}&=&(A_1B_2^{-1})(B_2C_1^{-1}),\\
C_1A_2^{-1}&=&(C_1A_1^{-1})(A_1A_2^{-1}),\\
C_2A_1^{-1}&=&(C_2C_1^{-1})(C_1A_1^{-1}) \textrm{ and}\\
C_2B_1^{-1}&=&(C_2A_1^{-1})(A_1B_1^{-1}).
\end{eqnarray*}
It can be also seen that $T(a_2,c_1)=(e_1,c_2)$ with reverse orientation, which implies that
\[
E_1^{-1}C_2=T(A_1C_1^{-1})T^{-1}\in G,
\]
and also $E_1C_2^{-1}\in G$ by the commutativity.
Consider the diffeomorphism
\[
\Gamma=(B_1C_2^{-1})(C_1C_2^{-1})(A_1C_2^{-1})(B_1C_2^{-1})\in G,
\]
which sends $(e_1,c_2)$ to $(d,c_2)$. This implies that
\[
DC_2^{-1}=\Gamma(E_1C_2)\Gamma^{-1}\in G.
\]
The subgroup $G$ contains 
\[
DA_1^{-1}=(DC_2^{-1})(C_2A_1^{-1}).
\]
By the lantern relation, the following holds:
\[
A_1C_1C_2A_3=A_2E_1D,
\]
where the curves are depicted in Figure~\ref{curves}.  One can rewrite this equation as
\[
A_3=(A_2C_1^{-1})(E_1C_2^{-1})(DA_1^{-1}),
\]
which is clearly contained in $G$. Hence, since 
\[
A_3(B_3B_2^{-1})(a_3)=b_3,
\]
we get 
\[
B_3=\big(A_3(B_3B_2^{-1})\big)A_3\big(A_3(B_3B_2^{-1})\big)^{-1}\in G,
\]
which implies that $B_1,B_2,\ldots, B_{g+1}$ are all contained in $G$ by the action of $T$.  Moreover, we obtain 
\begin{eqnarray*}
C_1&=&(C_1B_1^{-1})B_1\in G, \\
A_1&=&(A_1B_1^{-1})B_1\in G\textrm{ and so}\\
A_2&=&(A_2A_1^{-1})A_1\in G.
\end{eqnarray*}
By conjugating $C_1$ with powers of $T$, we see that $C_1,C_2,\ldots C_{g-1}$ are in $G$. This finishes the proof.
\end{proof}

\begin{theorem}\label{lem11}
If $g\geq 14$, then $\mod^{*}(\Sigma_g)$ can be generated by the two elements $T$ and $T^2A_gB_6A_2E_2^{-1}B_8^{-1}C_1^{-1}$.  The order of $T$ is $g+1$ or $2g+2$  when $g$ is odd or even, respectively.  The order of $T^2A_gB_6A_2E_2^{-1}B_8^{-1}C_1^{-1}$ is $g+1$ or $\frac{g+1}{2}$ when $g$ is even or odd, respectively.
\end{theorem}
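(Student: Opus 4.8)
The plan is to mirror the strategy of Theorem~\ref{lem1} almost verbatim, replacing the rotation $S$ by the orientation reversing element $T$ throughout. The key structural fact to exploit is that $T(\alpha)=S(\alpha)$ for every curve $\alpha$ in Figure~\ref{curves} other than $d$, so the curve-by-curve bookkeeping carried out in Theorem~\ref{lem1} transfers directly. Let $H^{*}$ be the subgroup of $\mod^{*}(\Sigma_g)$ generated by $T$ and $T^2F_1$, where $F_1:=A_gB_6A_2E_2^{-1}B_8^{-1}C_1^{-1}$. First I would reproduce the chain of conjugations and products from Theorem~\ref{lem1} to show that $B_1C_1^{-1},\ldots,B_{g-1}C_{g-1}^{-1}$, then $B_2C_1^{-1},\ldots,B_gC_{g-1}^{-1}$, and finally $C_1C_2^{-1}$, $B_1B_2^{-1}$, and $A_1A_2^{-1}$ all lie in $H^{*}$. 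Once these three elements are in $H^{*}$, Theorem~\ref{thm2} (the orientation reversing analogue of Theorem~\ref{thm1}) immediately gives that $T$, $A_1A_2^{-1}$, $B_1B_2^{-1}$, $C_1C_2^{-1}$ generate $\mod^{*}(\Sigma_g)$, hence $H^{*}=\mod^{*}(\Sigma_g)$.

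The one place where genuine care is required, and the step I expect to be the main obstacle, is the bookkeeping of orientations. Because $T$ reverses orientation, the conjugation formula behaves differently: if $T(a)=b$ as unoriented curves but $T$ reverses the orientation, then $TAT^{-1}=B^{-1}$ rather than $B$. Consequently the intermediate elements will appear with inverted twists, exactly as already happened in the proof of Theorem~\ref{thm2} (compare $B_3^{-1}B_2=T(B_2B_1^{-1})T^{-1}$ there, which was then converted to $B_2B_3^{-1}$ using commutativity of disjoint twists). I would therefore recompute each conjugation $T^k F_1 T^{-k}$ carefully, tracking the signs; the products of the form $F_iF_j^{-1}$ that isolate a single pair of twists, and the conjugations $(F_iF_j^{-1})F_i(F_iF_j^{-1})^{-1}$, are intrinsic to $\mod(\Sigma_g)$ and so are unaffected, but every step that conjugates by a power of $T$ must be re-examined for a sign flip and, where needed, corrected via the commutativity of disjoint Dehn twists. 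Since the even powers $T^{2k}$ preserve orientation (as $T^2=(SR)^2$ is orientation preserving) while odd powers reverse it, the sign flips occur only in the odd-power conjugations, which localizes the analysis.

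For the order computation I would invoke Lemma~\ref{order} exactly as in Theorem~\ref{lem1}. The element $T^2$ is orientation preserving and acts on curves the same way $S^2$ does, sending $a_g$, $b_6$, $a_2$ to $c_1$, $b_8$, $e_2$ respectively, so $T^2 F_1 = T^2\,(A_gB_6A_2)(E_2^{-1}B_8^{-1}C_1^{-1})$ fits the hypothesis of Lemma~\ref{order} with $R=T^2$. Thus the order of $T^2F_1$ equals the order of $T^2$. I would then determine the order of $T$ itself: since $T=SR$ with $R$ a reflection commuting appropriately with $S$, one computes $T^2=S^2$ (the reflection squares to the identity and the rotation axis is fixed by $R$), so the order of $T$ is twice the order of $S^2$ when $g$ is even and equals it when $g$ is odd. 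Recalling that the order of $S^2$ is $g+1$ for $g$ even and $\frac{g+1}{2}$ for $g$ odd, this yields order $g+1$ for $T$ when $g$ is odd and $2g+2$ when $g$ is even, and order $g+1$ (even $g$) or $\frac{g+1}{2}$ (odd $g$) for $T^2F_1$, matching the statement. The only subtlety here is verifying the relation $T^2=S^2$, which follows from the geometry of the reflection $R$ and the rotation $S$ for the model in Figure~\ref{rotation}.
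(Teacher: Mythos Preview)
Your proposal is correct and follows essentially the same approach as the paper: the paper also reproduces the argument of Theorem~\ref{lem1} with $T$ in place of $S$, explicitly tracking the sign inversions at the odd-power conjugations (for instance defining $H_4'=T^5H_1T^{-5}$ and then setting $H_4=(H_4')^{-1}$, and using $B_6^{-1}C_6$ rather than $B_6C_6^{-1}$ when conjugating by $T^{-5}$), and then appealing to Theorem~\ref{thm2}. Your observation that $T^2=S^2$ (so even-power conjugations are literally unchanged and odd-power conjugations produce inverses) is exactly the mechanism the paper exploits, and your order computation via Lemma~\ref{order} matches the paper's intent.
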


\begin{proof}
Let $H$ be the subgroup of $\mod^{*}(\Sigma_g)$ generated by the elements $T$ and $T^2H_1$ so that  $H_1:=A_gB_6A_2E_2^{-1}B_8^{-1}C_1^{-1}$. It is clear that $H_1\in H$. We then get the following element:
\[
H_2:=T^2H_1T^{-2}=C_1B_8E_2E_4^{-1}B_{10}^{-1}C_3^{-1}.
\]
It follows from $H_1H_2^{-1}(a_g,b_6,a_2,e_2,b_8,c_1)=(a_g,e_4,a_2,e_2,b_8,c_1)$ that we have
\[
H_3:=(H_1H_2^{-1})H_1(H_1H_2^{-1})^{-1}=A_gE_4A_2E_2^{-1}B_8^{-1}C_1^{-1}\in H.
\]
Thus, we get
\begin{eqnarray*}
H_1H_3^{-1}&=&B_6E_4^{-1}\in H \textrm{ and so}\\
B_4E_2^{-1}&=&T^{-2}(H_1H_3^{-1})T^2\in H.
\end{eqnarray*}
We also obtain
\begin{eqnarray*}
H_4'&:=&T^5H_1T^{-5}=E_7B_{13}C_6C_4^{-1}B_{11}^{-1}E_5^{-1}\in H \textrm{ and}\\
H_4&:=&(H_4')^{-1}=E_5B_{11}C_4C_6^{-1}B_{13}^{-1}E_7^{-1}\in H.
\end{eqnarray*}
It can be checked that $H_1H_4^{-1}(a_g,b_6,a_2,e_2,b_8,c_1)=(a_g,c_6,a_2,e_2,b_8,c_1)$, which implies
\[
H_5:=(H_1H_4^{-1})H_1(H_1H_4^{-1})^{-1}=A_gC_{6}A_2E_2^{-1}B_{8}^{-1}C_1^{-1}\in H.
\]
Hence,
\begin{eqnarray*}
H_1H_5^{-1}&=&B_6C_6^{-1}\in H \textrm{ and}\\
H_1^{-1}H_5&=&B_6^{-1}C_6\in H.
\end{eqnarray*}
From these, we have
\begin{eqnarray*}
B_g^{-1}A_g&=&T^{-7}(B_6C_6^{-1})T^7\in H,\\
B_2C_2^{-1}&=&T^{-4}(B_6C_6^{-1})T^4\in H \textrm{ and}\\
B_1C_1^{-1}&=&T^{-5}(B_6^{-1}C_6)T^5\in H.\\
\end{eqnarray*}

The subgroup $H$ contains 
\begin{eqnarray*}
H_6:&=&H_1(A_g^{-1}B_g)(E_2B_4^{-1})\\
&=&(A_gB_6A_2E_2^{-1}B_8^{-1}C_1^{-1})(A_g^{-1}B_g)(E_2B_4^{-1})\\
&=&B_gB_6A_2B_4^{-1}B_8^{-1}C_1^{-1}
\end{eqnarray*}
and
\begin{eqnarray*}
H_7:&=&(B_6E_4^{-1})(T^4H_6T^{-4})\\
&=&(B_6E_4^{-1})(B_3B_{10}E_4B_8^{-1}B_{12}^{-1}C_5^{-1})\\
&=&B_3B_{10}B_6B_8^{-1}B_{12}^{-1}C_5^{-1}\in H.
\end{eqnarray*}
Since $H_6H_7^{-1}(b_g,b_6,a_2,b_4,b_8,c_1)=(b_g,c_5,a_2,b_4,b_8,c_1)$, we obtain
\[
H_8:=(H_6H_7^{-1})H_6(H_6H_7^{-1})^{-1}=B_gC_5A_2B_4^{-1}B_8^{-1}C_1^{-1}\in H.
\]
We conclude that
\begin{eqnarray*}
B_6C_5^{-1}&=&H_6H_8^{-1}\in H\textrm{ and}\\
B_6^{-1}C_5&=&H_6^{-1}H_8\in H,
\end{eqnarray*}
which implies that 
\begin{eqnarray*}
C_1B_2^{-1}&=&T^{-4}(C_5B_6^{-1})T^4\in H \textrm{ and}\\
B_1A_1^{-1}&=&T^{-5}(B_6^{-1}C_5)T^5\in H.
\end{eqnarray*}

 Hence, we obtain
\begin{eqnarray*}
C_1C_2^{-1}&=&(C_1B_2^{-1})(B_2C_2^{-1})\in H \textrm{ and}\\
B_1B_2^{-1}&=&(B_1C_1^{-1})(C_1B_2^{-1})\in H.
\end{eqnarray*}
By Theorem~\ref{thm2}, it remains to show that $A_1A_2^{-1}\in H$.
\begin{eqnarray*}
B_2A_2^{-1}&=&T^{-2}(B_4E_2^{-1})T^2 \in H \textrm{ and so}\\
A_1A_2^{-1}&=&(A_1B_1^{-1})(B_1B_2^{-1})(B_2A_2^{-1})\in H,
\end{eqnarray*}
which finishes the proof.
\end{proof}

\begin{theorem}\label{lem12}
If $g\geq 16$, then $\mod^{*}(\Sigma_g)$ is generated by the two elements $T$ and $T^3A_gE_0B_8B_{11}E_3C_2$.
The order of $T$ is $g+1$ or $2g+2$  when $g$ is odd or even, respectively.  The order of $T^3A_gE_0B_8B_{11}E_3C_2$ can be summarized as below:
\[ 
\begin{cases} 
      g+1 & \textrm{if}~ 3\nmid g+1 ~\textrm{and}~ g ~\textrm{is odd,} \\
       2g+2 & \textrm{if}~ 3\nmid g+1 ~\textrm{and}~ g ~\textrm{is even,} \\
      \frac{g+1}{3} & \textrm{if}~ 6\mid g+1, \\
       \frac{2g+2}{3} & \textrm{if}~ 6\mid g+1 ~\textrm{and}~ 6\nmid g+1.
   \end{cases}
\]
\end{theorem}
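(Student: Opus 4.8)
The plan is to run the argument of Theorem~\ref{lem2} with the same structure, replacing the rotation $S$ by the orientation-reversing element $T$ and making the final reduction to Theorem~\ref{thm2} rather than Theorem~\ref{thm1}. Writing $K_1:=A_gE_0B_8B_{11}E_3C_2$, I let $K$ be the subgroup of $\mod^{*}(\Sigma_g)$ generated by $T$ and $T^3K_1$; the goal is to show that $K$ contains the three elements $A_1A_2^{-1}$, $B_1B_2^{-1}$ and $C_1C_2^{-1}$, since by Theorem~\ref{thm2} this forces $K=\mod^{*}(\Sigma_g)$.

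The single feature that distinguishes this from the orientation-preserving case is the action of $T$ on Dehn twists. Because $T$ acts on the curves of Figure~\ref{curves} exactly as $S$ does (away from $d$) but reverses orientation, one has $TAT^{-1}=(T(a))^{-1}$, so an \emph{even} power of $T$ conjugates a Dehn twist to a positive Dehn twist — just as $S$ would — while an \emph{odd} power introduces an inverse. Consequently each conjugation by $T^3$ reproduces the same curves as the corresponding step of Theorem~\ref{lem2} but with the signs of the twists reversed, and I will compensate exactly as in the passage $H_4':=T^5H_1T^{-5}$, $H_4:=(H_4')^{-1}$ of Theorem~\ref{lem11}: by passing to inverses at the appropriate places and reordering commuting (disjoint) twists. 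With this bookkeeping the same chain of commutators and cancellations as in Theorem~\ref{lem2} produces $B_8E_6^{-1}$, then $B_5E_3^{-1}$ and $B_2E_0^{-1}$, then $B_5C_4^{-1}$, $B_2C_1^{-1}$ and $B_1A_1^{-1}$, and finally $B_8C_8^{-1}$; conjugating the last by suitable powers of $T$ yields $B_1C_1^{-1}$ and $B_2C_2^{-1}$. From these the targets $B_1B_2^{-1}=(B_1C_1^{-1})(C_1B_2^{-1})$, $C_1C_2^{-1}$ and $A_1A_2^{-1}=A_1E_0^{-1}$ are assembled word for word as at the end of Theorem~\ref{lem2}.

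For the orders, the order of $T$ is $g+1$ when $g$ is odd and $2g+2$ when $g$ is even. To pin down the order of the second generator I apply Lemma~\ref{order} with $R=T^3$, $x=A_gE_0B_8$ and $y:=T^3xT^{-3}$. Since $T^3$ is orientation-reversing and sends $(a_g,e_0,b_8)$ to $(c_2,e_3,b_{11})$, I obtain $y=C_2^{-1}E_3^{-1}B_{11}^{-1}$, whence $y^{-1}=B_{11}E_3C_2$ and $xy^{-1}=A_gE_0B_8B_{11}E_3C_2=K_1$ (the three left twists being disjoint from the three right ones, so they commute). This is precisely where the sign pattern of the word differs from Theorem~\ref{lem2}: the inverses appearing there become positive twists here because $T^3$ reverses orientation. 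Lemma~\ref{order} then shows $T^3K_1$ has the same order as $T^3$, and since $\mathrm{ord}(T^3)=\mathrm{ord}(T)/\gcd(3,\mathrm{ord}(T))$, a short case analysis on the parity of $g$ and divisibility of $g+1$ by $3$ gives the four listed values: $g+1$ (resp. $2g+2$) when $3\nmid g+1$ and $g$ is odd (resp. even), and $\frac{g+1}{3}$ (resp. $\frac{2g+2}{3}$) when $3\mid g+1$ and $g$ is odd (resp. even).

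The main obstacle I anticipate is the sign bookkeeping under odd powers of $T$: each factorized two-twist expression such as $B_{\bullet}E_{\bullet}^{-1}$ or $B_{\bullet}C_{\bullet}^{-1}$ must be recovered with the correct sign, which forces taking inverses of the right conjugates and repeatedly invoking commutativity to reorder disjoint twists — essentially the only new work compared with Theorem~\ref{lem2}. As in the Remark following Theorem~\ref{lem1}, the hypothesis $g\geq 16$ is what guarantees that the relevant curves (for instance $b_8$ and $b_{11}$, or $a_g$ and the high-index curves produced by $T^3,T^4,\dots$) stay disjoint, so that the conjugations realize the claimed curve images without spurious intersections.
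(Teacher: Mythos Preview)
Your proposal is correct and follows essentially the same route as the paper's own proof: define $K_1=A_gE_0B_8B_{11}E_3C_2$, run the chain of conjugations from Theorem~\ref{lem2} with $T$ in place of $S$, absorb the sign flips coming from odd powers of $T$ by passing to inverses (exactly as you describe), extract $B_8E_6^{-1}$, $B_5E_3^{-1}$, $B_2E_0^{-1}$, $B_5C_4^{-1}$, $B_2C_1^{-1}$, $B_1A_1^{-1}$, $B_8C_8^{-1}$ in that order, and reduce to Theorem~\ref{thm2}; the order computation via Lemma~\ref{order} with $R=T^3$, $x=A_gE_0B_8$ is precisely what the paper intends. Your identification of the sign bookkeeping under odd powers of $T$ as the only new work compared with Theorem~\ref{lem2} is exactly right.
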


\begin{proof}
Let $K_1:=A_gE_0B_8B_{11}^{-1}E_3^{-1}C_2^{-1}$ so that the subgroup  
$K$ of $\mod^{*}(\Sigma_g)$ is generated by $T$ and $T^3K_1$. Thus, $K_1\in K$. We have 
\[
K_2:=T^3K_1T^{-3}=C_2^{-1}E_3^{-1}B_{11}^{-1}B_{14}^{-1}E_6^{-1}C_2^{-1}\in K.
\]
Since $K_1K_2^{-1}$ maps the curves $(a_g,e_0,b_8,b_{11},e_3,c_2)$ to $(a_g,e_0,e_6,b_{11},e_3,c_2)$, we obtain the following element:
\[
K_3:=(K_1K_2^{-1})K_1(K_1K_2^{-1})^{-1}=A_gE_0E_6B_{11}E_3C_2\in K.
\]
The subgroup $K$ contains $K_1K_3^{-1}=B_8E_6^{-1}$ and $K_1^{-1}K_3=B_8^{-1}E_6$ which imply that
\begin{eqnarray*}
B_5E_3^{-1}&=&T^{-3}(B_8^{-1}E_6)T^3\in K \textrm{ and so}\\
B_2E_0^{-1}&=&T^{-6}(B_8E_6^{-1})T^6\in K.
\end{eqnarray*}
Then we have 
\begin{eqnarray*}
K_4&:=&(B_5E_3^{-1})K_1=A_gE_0B_8B_{11}B_5C_2\in K \textrm{ and} \\ 
K_5&:=&T^2K_4T^{-2}=C_1E_2B_{10}B_{13}B_7C_4\in K.
\end{eqnarray*}
The diffeomorphism $K_4K_5$ satisfies $K_4K_5(a_g,e_0,b_8,b_{11},b_5,c_2)=(a_g,e_0,b_8,b_{11},c_4,c_2)$, which gives the element
\[
K_6:=(K_4K_5)K_4(K_4K_5)^{-1}=A_gE_0B_8B_{11}C_4C_2\in K.
\]
Hence, $K_4^{-1}K_6=B_5^{-1}C_4$ and $K_4K_6^{-1}=B_5C_4^{-1}$ are contained in $K$. And so, we get
\begin{eqnarray*}
B_2C_1^{-1}&=&T^{-3}(B_5^{-1}C_4)T^3\in K \textrm{ and}\\
B_1A_1^{-1}&=&T^{-4}(B_5C_4^{-1})T^4\in K.
\end{eqnarray*}
Then, we get the elements
\begin{eqnarray*}
    K_7&:=&(C_1B_2^{-1})(B_2E_0^{-1})K_6\\
    &=&(C_1E_0^{-1})A_gE_0B_8B_{11}C_4C_2\\
     &=& A_gC_1B_8B_{11}C_4C_2\in K
\end{eqnarray*}
    and 
\begin{eqnarray*}
K_8:=T^4K_7T^{-4}=C_3C_5B_{12}B_{15}C_8C_6\in K.
\end{eqnarray*}
Since $K_7K_8$ maps $(a_g,c_1,b_8,b_{11},c_4,c_2)$ to $(a_g,c_1,c_8,b_{11},c_4,c_2)$, we have
\[
K_9:=(K_7K_8)K_7(K_7K_8)^{-1}=A_gC_1C_8B_{11}C_4C_2\in K.
\]
We conclude that $K_7K_9^{-1}=B_8C_8^{-1}\in K$ and also $K_7^{-1}K_9=B_8^{-1}C_8\in K$. Furthermore, we have
\[
B_1C_1^{-1}=T^{-7}(B_8^{-1}C_8)T^7\in K \textrm{ and }B_2C_2^{-1}=T^{-6}(B_8C_8^{-1})T^6\in K.
\]
Therefore, the subgroup $K$ contains the following elements
\begin{eqnarray*}
B_1B_2^{-1}&=&(B_1C_1^{-1})(C_1B_2^{-1}),\\
C_1C_2^{-1}&=&(C_1B_1^{-1})(B_1B_2^{-1})(B_2C_2^{-1})\textrm{ and}\\
A_1A_2^{-1}&=&A_1E_0^{-1}=(A_1B_1^{-1})(B_1B_2^{-1})(B_2E_0^{-1}),
\end{eqnarray*}
which finishes the proof by Theorem~\ref{thm2}. 
\end{proof}

\begin{remark}
 We expect that our methods can be further used together with higher powers of $S$ and suitable sets of curves to produce more pairs of torsion generators for the mapping class groups.  Since we could not find a methodical way of expressing our results, we opt not to mention such results in the current paper.   
\end{remark}

\medskip

\noindent
{\bf Acknowledgements.}
This work was supported by the Scientific and Technological Research Council of Turkey (T\"{U}B\.{I}TAK)[grant number 120F118].


\end{document}